\newtheorem{thm}{Theorem}
\newtheorem{prop}[thm]{Proposition}
\newtheorem{assert}[thm]{Assertion}
\newtheorem{remarks}[thm]{Remark}
\newtheorem{definition}[thm]{Definition}
\newtheorem{exl}[thm]{Example}
\numberwithin{thm}{section}
\newcommand{\adj}{\leftrightarrow}
\newcommand{\adjeq}{\leftrightarroweq}
\def\Z{{\mathbb Z}}
\def\N{{\mathbb N}}
\begin{document}
\title{Remarks on Fixed Point Assertions in Digital Topology, 11}
\author{Laurence Boxer
\thanks{Department of Computer and Information Sciences, Niagara University, NY 14109, USA
and  \newline
Department of Computer Science and Engineering, State University of New York at Buffalo \newline
email: boxer@niagara.edu
\newline
ORCID: 0000-0001-7905-9643
}
}

\date{ }
\maketitle

\begin{abstract} The topic of fixed points in digital metric spaces has drawn yet more
publications with assertions that are incorrect, incorrectly proven, trivial, or
incoherently stated. We discuss publications with 
bad assertions concerning fixed points of self-functions on digital images, as in some of
our previous papers.

MSC: 54H25

Key words and phrases: digital topology, digital image,
fixed point, digital metric space
\end{abstract}

\section{Introduction}
{\bf Fixed points in digital topology} - this is a topic that has inspired some beautiful
results. It has also been at the center of many assertions that are incorrect, incorrectly
proven, trivial, or not presented coherently. In the current paper, we continue the work
of~\cite{BxSt19, Bx19, Bx19-3, Bx20, Bx22, BxBad6, BxBad7, BxBad8, BxBad9, BxBad10}
as we discuss several other papers with assertions that merit at least one of the 
descriptions in the previous sentence. Papers drawing our disapproval in
the current work have all come to our attention since acceptance for publication
of~\cite{BxBad10}.

\section{Preliminaries}

\subsection{Adjacencies, 
continuity, fixed point}

Much of the material in this section is quoted or
paraphrased from~\cite{BxBad10}.

In a digital image $(X,\kappa)$, if
$x,y \in X$, we use the notation
$x \adj_{\kappa}y$ to
mean $x$ and $y$ are $\kappa$-adjacent; we may write
$x \adj y$ when $\kappa$ can be understood. 
We write $x \adjeq_{\kappa}y$, or $x \adjeq y$
when $\kappa$ can be understood, to
mean 
$x \adj_{\kappa}y$ or $x=y$.

The most commonly used adjacencies in the study of digital images 
are the $c_u$ adjacencies. These are defined as follows.
\begin{definition}
\label{cu-adj-Def}
Let $X \subset \Z^n$. Let $u \in \Z$, $1 \le u \le n$. Let 
$x=(x_1, \ldots, x_n),~y=(y_1,\ldots,y_n) \in X$. Then $x \adj_{c_u} y$ if 
\begin{itemize}
    \item $x \neq y$,
    \item for at most $u$ distinct indices~$i$,
    $|x_i - y_i| = 1$, and
    \item for all indices $j$ such that $|x_j - y_j| \neq 1$ we have $x_j=y_j$.
\end{itemize}
\end{definition}

\begin{definition}
\label{path}
{\rm (See \cite{Khalimsky})} 
    Let $(X,\kappa)$ be a digital image. Let
    $x,y \in X$. Suppose there is a set
    $P = \{x_i\}_{i=0}^n \subset X$ such that
$x=x_0$, $x_i \adj_{\kappa} x_{i+1}$ for
$0 \le i < n$, and $x_n=y$. Then $P$ is a
{\em $\kappa$-path} (or just a {\em path}
when $\kappa$ is understood) in $X$ from $x$ to $y$,
and $n$ is the {\em length} of this path.
\end{definition}

\begin{definition}
{\rm \cite{Rosenfeld}}
A digital image $(X,\kappa)$ is
{\em $\kappa$-connected}, or just {\em connected} when
$\kappa$ is understood, if given $x,y \in X$ there
is a $\kappa$-path in $X$ from $x$ to $y$. The {\rm $\kappa$-component of~$x$ in~$X$} is the
maximal $\kappa$-connected subset
of~$X$ containing~$x$.
\end{definition}

\begin{definition}
{\rm \cite{Rosenfeld, Bx99}}
Let $(X,\kappa)$ and $(Y,\lambda)$ be digital
images. A function $f: X \to Y$ is 
{\em $(\kappa,\lambda)$-continuous}, or
{\em $\kappa$-continuous} if $(X,\kappa)=(Y,\lambda)$, or
{\em digitally continuous} when $\kappa$ and
$\lambda$ are understood, if for every
$\kappa$-connected subset $X'$ of $X$,
$f(X')$ is a $\lambda$-connected subset of $Y$.
\end{definition}

\begin{thm}
{\rm \cite{Bx99}}
A function $f: X \to Y$ between digital images
$(X,\kappa)$ and $(Y,\lambda)$ is
$(\kappa,\lambda)$-continuous if and only if for
every $x,y \in X$, if $x \adj_{\kappa} y$ then
$f(x) \adjeq_{\lambda} f(y)$.
\end{thm}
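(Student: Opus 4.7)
The plan is to prove both implications directly from the definitions, using paths as the bridge between the global (connectedness-preserving) formulation and the local (adjacency-preserving) formulation.

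For the forward direction, I would assume $f$ is $(\kappa,\lambda)$-continuous and take any $x,y \in X$ with $x \adj_\kappa y$. The set $X' = \{x,y\}$ is $\kappa$-connected, since $x,y$ themselves form a $\kappa$-path of length~$1$. By continuity, $f(X') = \{f(x), f(y)\}$ is a $\lambda$-connected subset of $Y$. A two-point set is $\lambda$-connected precisely when its two points coincide or are $\lambda$-adjacent, so $f(x) \adjeq_\lambda f(y)$, as required.

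For the reverse direction, I would assume $f$ preserves adjacency in the stated sense and fix a $\kappa$-connected subset $X' \subseteq X$. To show $f(X')$ is $\lambda$-connected, I would pick arbitrary $u, v \in f(X')$, choose preimages $x, y \in X'$ with $f(x)=u$, $f(y)=v$, and use the $\kappa$-connectedness of $X'$ to produce a $\kappa$-path $x = x_0, x_1, \ldots, x_n = y$ in $X'$. Applying $f$ yields the sequence $f(x_0), f(x_1), \ldots, f(x_n)$ in $f(X')$, with consecutive terms satisfying $f(x_i) \adjeq_\lambda f(x_{i+1})$ by hypothesis. Deleting any repeated consecutive entries produces a genuine $\lambda$-path from $u$ to $v$ inside $f(X')$.

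The only subtlety, and the step I would flag as the main obstacle, is handling the possibility that consecutive images $f(x_i)$ coincide; Definition~\ref{path} requires strict $\kappa$-adjacency between successive vertices of a path, not $\adjeq_\kappa$. The clean remedy is the collapsing argument just described: walk through the sequence and skip any $f(x_{i+1})$ equal to $f(x_i)$, leaving a subsequence whose consecutive terms are genuinely $\lambda$-adjacent and which still begins at $u$ and ends at $v$. Combining the two directions completes the proof.
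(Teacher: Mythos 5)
Your proof is correct and is essentially the standard argument for this characterization; the paper itself only quotes the theorem from the cited reference without reproducing a proof, and that reference's proof proceeds exactly as you do --- the forward direction via the two-point connected set $\{x,y\}$, the converse by pushing a $\kappa$-path forward under $f$ and collapsing repeated consecutive vertices. Your explicit handling of the case $f(x_i)=f(x_{i+1})$ is precisely the care the argument needs.
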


A {\em fixed point} of a function $f: X \to X$ 
is a point $x \in X$ such that $f(x) = x$. 

As a convenience, if $x$ is a point in the domain of a function $f$, we will often
abbreviate ``$f(x)$" as ``$fx$".

\subsection{Digital metric spaces}
\label{DigMetSp}
A {\em digital metric space}~\cite{EgeKaraca-Ban} is a triple
$(X,d,\kappa)$, where $(X,\kappa)$ is a digital image and $d$ is a metric on $X$. The
metric is usually taken to be the Euclidean
metric or some other $\ell_p$ metric; 
alternately, $d$ might be taken to be the
shortest path metric. These are defined
as follows.
\begin{itemize}
    \item Given 
          $x = (x_1, \ldots, x_n) \in \Z^n$,
          $y = (y_1, \ldots, y_n) \in \Z^n$,
          $p > 0$, $d$ is the $\ell_p$ metric
          if \[ d(x,y) =
          \left ( \sum_{i=1}^n
          \mid x_i - y_i \mid ^ p
          \right ) ^ {1/p}. \]
          Note the special cases: if $p=1$ we
          have the {\em Manhattan metric}; if
          $p=2$ we have the 
          {\em Euclidean metric}.
    \item \cite{ChartTian} If $(X,\kappa)$ is a 
          connected digital image, 
          $d$ is the {\em shortest path metric}
          if for $x,y \in X$, $d(x,y)$ is the 
          length of a shortest
          $\kappa$-path in $X$ from $x$ to $y$.
\end{itemize}


We say a metric space $(X,d)$ is {\em uniformly discrete}
if there exists $\varepsilon > 0$ such that
$x,y \in X$ and $d(x,y) < \varepsilon$ implies $x=y$.

\begin{remarks}
\label{unifDiscrete}
If $X$ is finite or  
\begin{itemize}
\item {\rm \cite{Bx19-3}}
$d$ is an $\ell_p$ metric, or
\item $(X,\kappa)$ is connected and $d$ is 
the shortest path metric,
\end{itemize}
then $(X,d)$ is uniformly discrete.

For an example of a digital metric space
that is not uniformly discrete, see
Example~2.10 of~{\rm \cite{Bx20}}.
\end{remarks}

We say a sequence $\{x_n\}_{n=0}^{\infty}$ is 
{\em eventually constant} if for some $m>0$, 
$n>m$ implies $x_n=x_m$.
The notions of convergent sequence and complete digital metric space are often trivial, 
e.g., if the digital image is uniformly 
discrete, as noted in the following, a minor 
generalization of results 
of~\cite{HanBan,BxSt19}.

\begin{prop}
\label{eventuallyConst}
{\rm \cite{Bx20}}
If $(X,d)$ is a uniformly discrete metric space,
then any Cauchy sequence in $X$
is eventually constant, and $(X,d)$ is a complete metric space.
\end{prop}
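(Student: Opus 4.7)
The plan is to exploit the uniform discreteness constant $\varepsilon > 0$ directly against the Cauchy condition, which collapses the notion of ``getting closer than $\varepsilon$'' into the notion of ``being equal.''

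First I would fix a Cauchy sequence $\{x_n\}_{n=0}^{\infty}$ in $X$ and let $\varepsilon > 0$ be a uniform discreteness constant, so that $d(x,y) < \varepsilon$ forces $x = y$. Applying the Cauchy property to this particular $\varepsilon$, I obtain an index $m$ such that $d(x_i, x_j) < \varepsilon$ whenever $i, j \ge m$. By the choice of $\varepsilon$, this immediately yields $x_i = x_j$ for all $i, j \ge m$, so in particular $x_n = x_m$ for all $n > m$. This is precisely the definition of being eventually constant.

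For completeness, I would then observe that any eventually constant sequence converges in any metric space: if $x_n = x_m$ for all $n > m$, then for every $\delta > 0$ and every $n > m$ we have $d(x_n, x_m) = 0 < \delta$, so $\{x_n\}$ converges to $x_m \in X$. Combining the two parts, every Cauchy sequence in $X$ converges to a point of $X$, which is the definition of completeness.

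There is no real obstacle here: the argument is essentially a one-line consequence of matching the Cauchy threshold to the uniform discreteness threshold. The only thing requiring a moment's care is to make sure that ``eventually constant'' is stated using $n > m$ rather than merely $n \ge m$, so that the conclusion matches the definition given just before the proposition statement.
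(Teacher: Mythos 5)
Your argument is correct and is the standard one; the paper itself gives no proof here, citing \cite{Bx20}, and the intended argument is exactly what you wrote: feed the uniform discreteness constant $\varepsilon$ into the Cauchy condition to force equality of all sufficiently late terms, then note that an eventually constant sequence converges. The only cosmetic point is to choose the index $m$ to be positive so as to match the paper's definition of ``eventually constant,'' which is trivially arranged.
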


\subsection{On fixed points}
A function $f: X \to X$ has a {\em fixed point} $x_0 \in X$ if
$fx_0 = x_0$. If $X$ is a topological space or if $(X,\kappa)$ is
a digital image, $X$ or, respectively, $(X,\kappa)$ has the
{\em fixed point property} (FPP) if for every continuous
(respectively, $\kappa$-continuous) $f: X \to X$ has a fixed point.
But the FPP turns out to be trivial in digital topology, as shown be the following.

\begin{thm}
\label{BEKLLthm}
    {\rm \cite{BEKLL}} A digital image $(X,\kappa)$ has the FPP if
    and only if $\#X = 1$.
\end{thm}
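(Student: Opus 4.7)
The plan is to prove both directions. The ``if'' direction is trivial: when $X = \{x_0\}$, the only self-map is $f(x_0) = x_0$, which has $x_0$ as a fixed point.

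For the ``only if'' direction, the plan is to assume $\#X \ge 2$ and exhibit a $\kappa$-continuous $f: X \to X$ with no fixed point. I would split into two cases based on the $\kappa$-connectivity of $X$.

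\textbf{Disconnected case.} If $(X,\kappa)$ is not connected, pick a $\kappa$-component $C$ of $X$ together with points $c \in C$ and $c' \in X \setminus C$. Define $f$ to send every point of $C$ to $c'$ and every point of $X \setminus C$ to $c$. Since $\kappa$-adjacent pairs lie in a common component, $f$ is constant on any such pair, hence $\kappa$-continuous. Every $x \in C$ is sent to $c' \notin C$, and every $x \in X \setminus C$ is sent to $c \in C$, so $f$ has no fixed point.

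\textbf{Connected case.} If $(X,\kappa)$ is connected, then since $\#X \ge 2$ there is an edge $x_0 \adj_\kappa x_1$. Define $f(x_0) = x_1$ and $f(x) = x_0$ for all $x \neq x_0$. The only nontrivial continuity check is for an edge $x \adj_\kappa y$ with exactly one endpoint equal to $x_0$; in that case $\{f(x),f(y)\} = \{x_0,x_1\}$, which satisfies $f(x) \adjeq_\kappa f(y)$ by the choice of the edge. Fixed points are ruled out because $f(x_0) = x_1 \neq x_0$ and for $x \neq x_0$ we have $f(x) = x_0 \neq x$.

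The only mild obstacle is noticing that purely constant maps will not do the job, since $x \mapsto c$ always fixes $c$; one must modify the constant-map idea, either by using two different constants on two different parts of a disconnection, or by reassigning the image of the single would-be fixed point to a $\kappa$-neighbor in the connected case.
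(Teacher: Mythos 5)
Your proof is correct. Note that the paper you are being compared against does not prove this statement at all --- it quotes it as a known result from the cited reference \cite{BEKLL} --- so there is no in-paper argument to diverge from; your two-case construction (swap components when $(X,\kappa)$ is disconnected; otherwise send an adjacent pair's first vertex to the second and everything else to the first) is essentially the standard argument given in that reference, and both the continuity checks and the fixed-point-freeness checks you perform are complete.
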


\section{\cite{DineshEtal}'s ``novel approach"}
One of the flaws of~\cite{DineshEtal} is its extensive use of
unnecessarily complex notation. Where possible, we simplify the
notation of~\cite{DineshEtal}.

\subsection{The introduction}
We mention many errors that appear in the Introduction 
of~\cite{DineshEtal}. Among these are attributions of
definitions and theorems to papers that used these, rather than to
papers in which they first appeared.

Definition 2.1 of~\cite{DineshEtal} - $\Z^q$ - is both incorrectly
attributed and incorrectly stated. This definition is attributed
in~\cite{DineshEtal} to~\cite{DolhareNal} rather than assuming the
reader will know it is classical. The definition stated is
\[ Z^q = \{ (a_1,a_2,a_3, \ldots, a_n) \mid  a_i \in \Z, 1 \le i \le q \}
\]
rather than
\[ Z^q = \{ (a_1,a_2,a_3, \ldots, a_q) \mid  a_i \in \Z, 1 \le i \le q \}.
\]

Definition 2.2 of~\cite{DineshEtal} - the~$c_u$ (also called~$k_u$ and $\ell_u$
in the literature) adjacency  - is both incorrectly
attributed and incorrectly stated. The given attribution is
to~\cite{EgeKaraca-Ban,DolhareNal} but should be to~\cite{BxHtpyProps}.
The definition stated in~\cite{DineshEtal} claims that points 
$x=(x_1, \ldots, x_q)$ and $y=(y_1, \ldots, y_q)$ are adjacent in this adjacency if at most~$u$ indices~$i$ satisfy
$|x_i-y_i| = 1$ and for all other indices~$j$, $|x_i-y_i| \neq 1$.
The latter inequality should be $|x_j-y_j| =0$.

Other incorrect attributions:
\[
  \begin{array}{lll}
     \underline{Entry} & \underline{\cite{DineshEtal}~attribution} & \underline{Better~attribution}  \\
      Def.~2.3~(dig.~img.) & \cite{EgeKaraca-Ban,DolhareNal} & 
           \cite{Rosenf79,Rosenfeld} \\
       Def.~2.4~(fixed pt.) & \cite{EgeKaraca-Ban,DolhareNal} & 
           classic    \\
      Def.~2.4~(FPP) & \cite{EgeKaraca-Ban,DolhareNal} & \cite{Rosenfeld} \\
      Thm.~2.5~(dig.~Banach~princ.) & \cite{EgeKaraca-Ban} & \cite{EgeKaraca-Ban};~ corrected~ proof~\cite{BxBad10}
  \end{array}
\]

\begin{figure}
    \centering
    \includegraphics[width=0.9\linewidth]{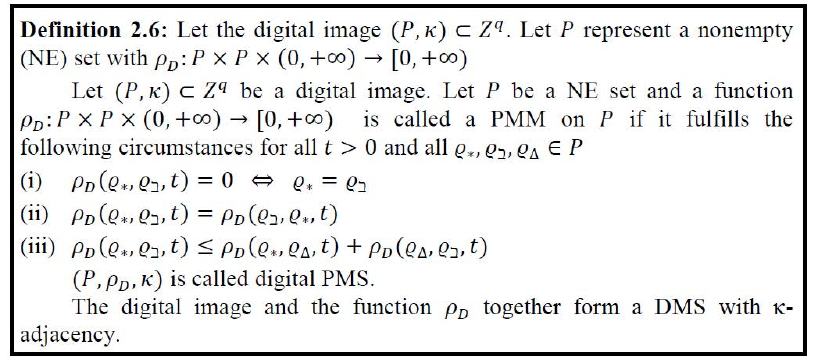}
    \caption{Definition 2.6 of \cite{DineshEtal}}
    \label{fig:DineshEtal2.6}
\end{figure}

Definition 2.6 of~\cite{DineshEtal} (see Figure~\ref{fig:DineshEtal2.6}) defines a function that has the
properties of a metric, but also has a third 
parameter $t > 0$ that is not used for any purpose in the paper.
Thus by taking~$t$ to be a positive constant, we see 
that we can omit this parameter
and replace this function by an actual metric.

Definition 2.8 of~\cite{DineshEtal} fails to clarify in
its statement that the
continuity it uses is of the classical $\varepsilon - \delta$
type. Example~4.1 of~\cite{BxSt19} shows that the classical 
$\varepsilon - \delta$ continuity does not imply digital continuity.

Lemma 2.9 of~\cite{DineshEtal} is unattributed. It should be 
attributed to Theorem~3.7 of~\cite{EgeKaraca-Ban}
and to Theorem~3.1 of~\cite{BxBad10}; the former does not have a
correct proof, and the latter does.

Lemma 2.10 of~\cite{DineshEtal} essentially duplicates 
Lemma 2.9 of~\cite{DineshEtal} and is also unattributed.

\subsection{\cite{DineshEtal}'s ``Theorem" 3.1}
\begin{figure}
    \centering
    \includegraphics[width=0.9\linewidth]{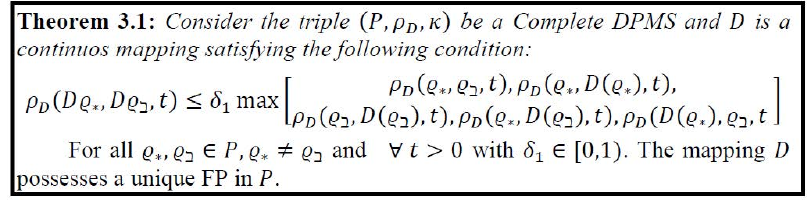}
    \caption{``Theorem" 3.1 of \cite{DineshEtal}}
    \label{fig:DinesheEtal3.1}
\end{figure}

''Theorem" 3.1 of \cite{DineshEtal} (see Figure~\ref{fig:DinesheEtal3.1})
is restated in this section in a much simpler and correct form, and is 
given a proof that is clearer and correct. 

The argument offered as proof in~\cite{DineshEtal} of this assertion
depends on a constant~$\delta_1 \in [0,1)$. From this, another constant,
here denoted~${\cal L}$~as we don't see how to duplicate in \LaTeX~the symbol used
in~\cite{DineshEtal}, is derived via
\[ {\cal L} = \frac{\delta_1}{1 - \delta_1}.
\]
The authors claim ${\cal L}$ is less than 1. But since $\delta_1$
can be a member of~$[1/2, 1)$, the claim is not true. The consequences of this
error propagate through the ``proof" argument, so
''Theorem" 3.1 of \cite{DineshEtal} is unproven.

A self-mapping on a digital image that satisfies the inequality of
Figure~\ref{fig:DinesheEtal3.1} is a 
{\em quasi-contraction} \cite{Ciric}. Fixed point results for
quasi-contractions on digital metric spaces, using the shortest-path
metric, have been obtained in~\cite{GH,BxBad7}.

We modify the statement of ''Theorem" 3.1 of \cite{DineshEtal}
as follows, and give a proof. We make the following changes from
''Theorem" 3.1 of \cite{DineshEtal}:
\begin{itemize}
    \item We omit the continuity and completeness assumptions.
    \item We add the assumption of uniform discreteness.
    \item We take $1/2$, not $1$, as the upper bound for the constant on the right side
          of~(\ref{quasicontractionIneq}) below.
    \item We simplify notation.
\end{itemize}

\begin{thm}
    \label{DineshEtal3.1corrected}
    Let $(X,d,\kappa)$ be a digital metric space, where~$d$ is
    uniformly discrete. Let
    $f: X \to X$ be a function and let $c$ be a constant,
    $0 \le c < 1/2$, such that for all $x,y \in X$,
    \begin{equation}
    \label{quasicontractionIneq}
        d(fx,fy) \le c \cdot \max \{
     d(x,y),~ d(x,fx),~d(y,fy),~d(x,fy),~d(y,fx) \}.
    \end{equation}
    Then $f$ has a unique fixed point in $X$.
\end{thm}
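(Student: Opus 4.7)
The plan is to imitate \'Ciri\'c's classical argument for quasi-contractions, with the benefits that uniform discreteness supplies completeness for free (Proposition~\ref{eventuallyConst}) and, better still, forces any Cauchy sequence to be eventually constant. This last point is what lets us dispense with the continuity hypothesis in~\cite{DineshEtal}: we do not need metric continuity to recognize the limit of the Picard iterates as a fixed point.

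Fix any $x_0 \in X$ and let $x_n = f^n(x_0)$. The key estimate I would establish first is
\[ d(x_{n+1}, x_{n+2}) \le \lambda \, d(x_n, x_{n+1}), \qquad \lambda := \frac{c}{1-c} < 1, \]
which is where the hypothesis $c < 1/2$ is used essentially. To derive it, apply (\ref{quasicontractionIneq}) with $x = x_n$, $y = x_{n+1}$. The five terms of the max collapse to $d(x_n, x_{n+1})$, $d(x_{n+1}, x_{n+2})$, and $d(x_n, x_{n+2})$, since $d(x_{n+1}, f x_n) = 0$. A short case analysis together with the triangle inequality $d(x_n, x_{n+2}) \le d(x_n, x_{n+1}) + d(x_{n+1}, x_{n+2})$ yields the claim: if $d(x_{n+1}, x_{n+2}) \le c\, d(x_{n+1}, x_{n+2})$ the distance is zero; otherwise we end with $(1-c)\,d(x_{n+1}, x_{n+2}) \le c\, d(x_n, x_{n+1})$.

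Iterating gives $d(x_n, x_{n+1}) \le \lambda^n d(x_0, x_1)$, and the standard geometric-series bound shows $\{x_n\}$ is Cauchy. By Proposition~\ref{eventuallyConst}, uniform discreteness makes $\{x_n\}$ eventually constant, say $x_n = x^*$ for all $n \ge N$. Then $f(x^*) = f(x_N) = x_{N+1} = x^*$, so $x^*$ is a fixed point.

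For uniqueness, suppose $fx = x$ and $fy = y$. Substituting into (\ref{quasicontractionIneq}) makes the four ``fixed-point terms'' either vanish or equal $d(x,y)$, yielding $d(x,y) \le c\, d(x,y)$; since $c < 1$, this forces $x = y$. The only nonroutine step is the case analysis producing $\lambda < 1$, where the threshold $c < 1/2$ appears unavoidably; everything else is bookkeeping supported by the uniform-discreteness framework.
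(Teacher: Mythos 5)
Your proposal is correct and follows essentially the same route as the paper's proof: Picard iteration, the key estimate $d(x_{n+1},x_{n+2}) \le \frac{c}{1-c}\, d(x_n,x_{n+1})$ obtained by collapsing the max and applying the triangle inequality (the paper simply bounds the max by the sum where you do a short case split, but this is cosmetic), then Proposition~\ref{eventuallyConst} to get an eventually constant sequence and the same substitution argument for uniqueness.
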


\begin{proof}
    We use ideas of~\cite{DineshEtal}. Let $x_0 \in X$ and
    $x_{i+1} = fx_i$, $i \ge 0$. For all~$i$,
    \[ d(x_{i+1},x_{i+2}) = d(fx_i, fx_{i+1}) \]
    \[\le c \cdot \max \{ d(x_i,x_{i+1}),~d(x_i,fx_i),~
       d(x_{i+1}, fx_{i+1}),~d(x_i,fx_{i+1}),~d(x_{i+1}, fx_i) \}
    \]
    \[ =  c \cdot \max \{ d(x_i,x_{i+1}),~d(x_i, x_{i+1}),
         ~d(x_{i+1},x_{i+2}),~d(x_i, x_{i+2}),~d(x_{i+1},x_{i+1}) \}
    \]
    \[ \le c \cdot \max \{ d(x_i,x_{i+1}),~d(x_{i+1}, x_{i+2}),
        ~d(x_i,x_{i+1}) + d(x_{i+1}, x_{i+2}), 0 \}
    \]
    \[ = c \cdot [d(x_i,x_{i+1}) + d(x_{i+1},x_{i+2}) ].
    \]
    Therefore, $(1-c) \cdot d(x_{i+1},x_{i+2}) \le c \cdot d(x_i,x_{i+1})$, or
    \[ d(x_{i+1},x_{i+2})  \le {\cal L} \cdot d(x_i,x_{i+1}),
    \]
    where ${\cal L} = \frac{c}{1-c} < 1$.
    An easy induction yields that
    $d(x_i,x_{i+m}) \le {\cal L}^{i+m} d(x_0,x_1)$. It follows
     from Proposition~\ref{eventuallyConst} that there exists~$N$ 
     such that $i>N$ implies $x_i = x_N$. Thus, $x_N$ is a 
     fixed point of~$f$.

To show the uniqueness of this fixed point, suppose $x'$
    is a fixed point of~$f$. Then
    \[ d(x_N,x') = d(fx_N, fx') \le \]
    \[ c \cdot \max \{ d(x_N,x'),~d(x_N,fx_N),~d(x',fx'), 
    ~d(x_N,fx'),~d(fx_N,x') \} =
    \]
     \[ c \cdot \max \{ d(x_N,x'),~ 0,~0,
       ~d(x_N,x'),~d(x_N,x') \} = c \cdot d(x_N,x').
    \]
    Since $0 \le c < 1/2$, we have  $d(x_N,x') = 0$, i.e., $x_N = x'$.
\end{proof}

\subsection{\cite{DineshEtal}'s Theorem 3.2}
\begin{figure}
    \centering
    \includegraphics[width=0.9\linewidth]{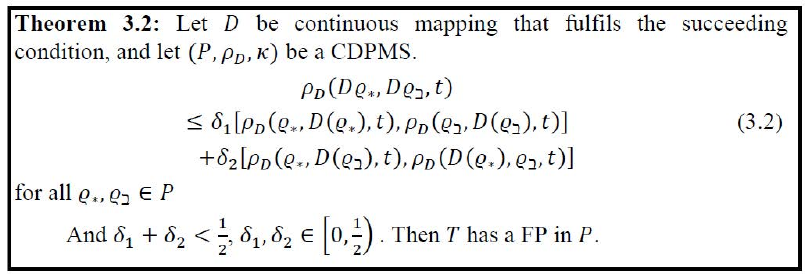}
    \caption{\cite{DineshEtal}'s ``Theorem" 3.2}
    \label{fig:DineshEtal3.2}
\end{figure}

Figure~\ref{fig:DineshEtal3.2} shows \cite{DineshEtal}'s 
Theorem 3.2. We feel our rewriting is
desirable because the flaws and difficult notation of~\cite{DineshEtal}'s
version make both the assertion and its proof difficult to understand.
The assertion is flawed as follows:
\begin{itemize}
    \item It is not clear if the continuity hypothesized is of
          the $\varepsilon - \delta$ variety, or the digital
          variety; however, we will show that neither is required.
    \item The right side of the inequality marked as ``(3.2)" is
        undefined. It appears that there are operators missing -
        apparently, the ``+" in each term according to the argument
        given as proof.
    \item In the conclusion, ``$T$" is apparently meant to be ``$D$".
\end{itemize}

We correct the statement of \cite{DineshEtal}'s 
``Theorem"~3.2. Changes made in the statement of 
the assertion:
\begin{itemize}
    \item We add the assumption that the metric, which we denote as~$d$, is
           uniformly discrete.
    \item We omit the unnecessary assumptions of continuity and completeness
          (the ``C" in ``CDPMS").
    \item We insert the missing ``+" operators.
    \item We use only one symbol for the function with which we are concerned.
    \item We simplify notation.
\end{itemize}
With these changes,
Theorem 3.2 of~\cite{DineshEtal} can be rewritten from the version
shown in~Figure~\ref{fig:DineshEtal3.2} to the following.

\begin{thm}
    \label{DineshEtal3.2corrected}
    Let $(X,d,\kappa)$ be a digital image. Let $f: X \to X$ 
    satisfy, for all $x,y \in X$ and
    some nonnegative constants
    $a,b$ such that $a+b < 1/2$,
    \begin{equation} 
    \label{DineshEtal3.2Ineq}
    d(fx,fy) \le a \cdot [d(x,fx) + d(y,fy)] +
       b \cdot [d(x,fy) + d(y, fx)].
    \end{equation}
    Then~$f$ has a fixed point in~$X$.
\end{thm}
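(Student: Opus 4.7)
The plan is to mimic the Picard-iteration argument used for Theorem~\ref{DineshEtal3.1corrected}. First I would fix an arbitrary $x_0 \in X$, define recursively $x_{i+1} = f x_i$ for $i \ge 0$, and apply the hypothesized inequality~(\ref{DineshEtal3.2Ineq}) with $x = x_i$, $y = x_{i+1}$. Under those substitutions the five metric terms on the right-hand side reduce to $d(x_i, x_{i+1})$, $d(x_{i+1}, x_{i+2})$, $d(x_i, x_{i+2})$, and the trivial $d(x_{i+1}, x_{i+1}) = 0$.

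The second step is the key algebraic manipulation: using the triangle inequality to bound $d(x_i, x_{i+2}) \le d(x_i, x_{i+1}) + d(x_{i+1}, x_{i+2})$, both the $a$-group and the $b$-group collapse into multiples of $d(x_i, x_{i+1}) + d(x_{i+1}, x_{i+2})$. Collecting terms gives
\[
(1 - a - b)\, d(x_{i+1}, x_{i+2}) \le (a + b)\, d(x_i, x_{i+1}),
\]
so $d(x_{i+1}, x_{i+2}) \le {\cal L} \cdot d(x_i, x_{i+1})$, where ${\cal L} = \frac{a+b}{1-a-b}$. The hypothesis $a + b < 1/2$ is exactly what is needed to force ${\cal L} < 1$; this is the only place in the argument where the bound $1/2$ (rather than $1$) is used.

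Induction then yields $d(x_i, x_{i+1}) \le {\cal L}^i d(x_0, x_1)$, and a standard geometric-series estimate shows $\{x_i\}_{i=0}^{\infty}$ is Cauchy. Since $d$ is uniformly discrete, Proposition~\ref{eventuallyConst} applies and gives that $\{x_i\}$ is eventually constant: there is $N$ with $x_i = x_N$ for all $i \ge N$. In particular $x_N = x_{N+1} = f x_N$, so $x_N$ is a fixed point of~$f$.

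The main obstacle is essentially the collapse in the second step: one must notice that after bounding $d(x_i, x_{i+2})$ by the triangle inequality, the coefficient of $d(x_{i+1}, x_{i+2})$ on the right-hand side becomes $a + b$ (not just $a$), so the transposition produces the contraction factor $\frac{a+b}{1-a-b}$, which explains why the threshold must be $1/2$ rather than $1$. Everything else is routine, and uniqueness (if desired) would follow by the same substitution trick used at the end of the proof of Theorem~\ref{DineshEtal3.1corrected}.
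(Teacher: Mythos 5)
Your proposal is correct and follows essentially the same route as the paper's own proof: the same Picard iteration, the same substitution $x=x_i$, $y=x_{i+1}$, the same triangle-inequality collapse yielding the factor $\frac{a+b}{1-(a+b)}<1$, and the same appeal to Proposition~\ref{eventuallyConst}. You rightly invoke uniform discreteness of $d$ at the last step; note that this hypothesis, though listed among the paper's intended corrections, was accidentally omitted from the formal statement of the theorem, so your explicit mention of it is if anything an improvement.
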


\begin{proof}
Our argument is largely that of~\cite{DineshEtal}.
Using our change of notation, let $x_0 \in X$ and let
a sequence $\{x_i\}_{i=1}^{\infty} \subset X$, where $x_{i+1} = fx_i$, be established.
From the inequality of the assertion, we obtain
  \[ d(x_{i+1}, x_{i+2}) = d(fx_i, fx_{i+1}) \le  
  \]
  \[ a \cdot [d(x_i,fx_{i}) + d(x_{i+1}, fx_{i+1})] +
  b \cdot [d(x_i, fx_{i+1}) + d(x_{i+1},fx_i)] =
  \]
  \[ a \cdot [d(x_i,x_{i+1}) + d(x_{i+1}, x_{i+2})] +
  b \cdot [d(x_i, x_{i+2}) + d(x_{i+1}, x_{i+1})] \le
  \]
 (using the Triangle Inequality)
 \[ a \cdot [d(x_i, x_{i+1}) + d(x_{i+1}, x_{i+2})] + b \cdot [d(x_i, x_{i+1}) + d(x_{i +1}, x_{i+2}) + 0].
 \]
Thus
\[ (1 - (a+b)) d(x_{i+1}, x_{i+2}) \le (a+b) d(x_i, x_{i+1}).
\]
Since $a+b < 1/2$, we have
\[
    0 \le \frac{a+b}{1 - (a+b)} < 1.
\]
Let $c =\frac{a+b}{1 - (a+b)}$. Then $0 \le c < 1$ and
\[  d(x_{i+1}, x_{i+2}) \le c \cdot d(x_i, x_{i+1}).
\]
An easy induction yields that for all positive integers $n,m$,
\[  d(x_n, x_{n+m}) \le  c^{m+n} \cdot d(x_0, x_1)
\to_{m \to \infty} 0.
\]
Thus, $\{x_n\}_{n=0}^{\infty}$ is a Cauchy sequence. By 
Proposition~\ref{eventuallyConst}, there exists $x_N \in X$ such that
for almost all~$i$, $x_i=x_N$. Thus, $x_N$ is a fixed point of~$f$.

To show the uniqueness of~$x_N$ as a fixed point, suppose $x'$ is a fixed
point of~$f$. Then
\[ d(x_N, x') = d(fx_N, fx') \le
\]
\[ a \cdot [d(x_N, fx_N) + d(x', fx')] + b[d(x_N, fx') + d(fx_N, x')] =
\]
\[ a \cdot (0+0) + b \cdot [d(x_N, x') + d(x_N, x')] = 2b \cdot d(x_N, x').
\]
Since $2b < 1$, we have
$d(x_N, x') = 0$, i.e., $x_N = x'$.
\end{proof}

\subsection{\cite{DineshEtal}'s ``Theorem" 3.3}
\begin{figure}
    \centering
    \includegraphics[width=0.9\linewidth]{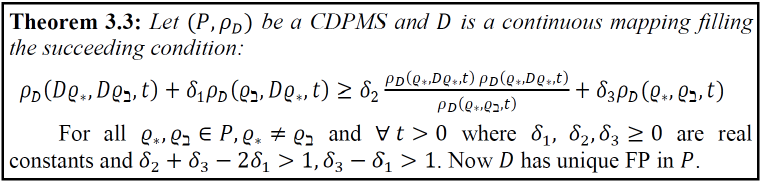}
    \caption{``Theorem" 3.3 of \cite{DineshEtal}}
    \label{fig:DineshEtal3.3}
\end{figure}

``Theorem" 3.3 of \cite{DineshEtal} is shown in 
Figure~\ref{fig:DineshEtal3.3}. As written, the assertion is false, as
shown by the following, stated in simpler notation.

\begin{exl}
    Let $X = \{2^n \mid n \in \N \}$. Let $d(x,y) = |x - y|$. 
    Let $f: X \to X$ be defined by $fx = 2x$. Then for
    any adjacency $\kappa$ on~$X$, $(X,d,\kappa)$ is a uniformly discrete
    digital metric space that satisfies the hypotheses of the assertion
    shown in the equivalent notation of Figure~\ref{fig:DineshEtal3.3}.
    However,~$f$ has no fixed point.
\end{exl}

\begin{proof}
    It is elementary that $(X,d,\kappa)$ is a uniformly discrete
    digital metric space, hence is complete, and that~$f$ has no
    fixed point. Let 
    \[\delta_1 = \delta_2 = 0, ~~~ \delta_3 = 1.5.
    \]
    These values satisfy the restrictions on $\delta_1, \delta_2, \delta_3$ shown
    in~Figure~\ref{fig:DineshEtal3.3} and reduce the distance relation to
    \begin{equation}
        \label{DineshEtal3.3relation}
        d(fx,fy) \ge 1.5 \cdot d(x,y)
    \end{equation}
    Then we have
    \[ d(f(2^i), f(2^j)) = |2^{i+1} - 2^{j+1}| = 2 \cdot d(2^i, 2^j) \ge 1.5 \cdot d(2^i, 2^j),
    \]
 so $f$ satisfies~(\ref{DineshEtal3.3relation}).
\end{proof}

To obtain a true assertion from 
Figure~\ref{fig:DineshEtal3.3}, we make the following changes:
\begin{itemize}
    \item We assume uniform discreteness.
    \item In the 3rd line of Figure~\ref{fig:DineshEtal3.3}, 
   we use ``$\le$" in place of ``$\ge$".
    \item We use ``for some constants" rather than 
    "for all constants".
    \item We use ``$b+c < 1$ instead of ``$b+c-2a > 1$".
    \item We use simpler notation.
    \item We further show that the fixed point is unique.
\end{itemize} 
The assertion can then be restated as follows.

\begin{prop}
    \label{DineshEtal3.3}
    Let $(X,d, \kappa)$ be a uniformly discrete digital metric space.
    Let $f: X \to X$ be such that for some nonnegative constants $a,b,c$ such that 
    $b+c<1$, and for all $x,y \in X$ such that $x \neq y$ we have
    \[ d(fx,fy) + a \cdot d(y,fx) \le b \cdot \frac{d(x,fx) \cdot d(x,fx)}{d(x,y)}
                + c \cdot d(x,y).
    \]
    Then $f$ has a unique fixed point.
\end{prop}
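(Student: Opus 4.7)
The plan is to use Picard iteration on $f$ exactly as in the proofs of Theorems~\ref{DineshEtal3.1corrected} and~\ref{DineshEtal3.2corrected}. Fix $x_0 \in X$ and set $x_{i+1} = fx_i$. If at some stage $x_i = x_{i+1}$, then $x_i$ is already a fixed point, so we may assume $x_i \ne x_{i+1}$ for all $i$, which is exactly what is needed to invoke the hypothesis with $x = x_i$, $y = x_{i+1}$.

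With this substitution, the key observations are that $d(x_{i+1}, fx_i) = d(x_{i+1}, x_{i+1}) = 0$, that $d(x_i, fx_i) = d(x_i, x_{i+1})$, and that the quotient $d(x_i, fx_i)^2 / d(x_i, x_{i+1})$ collapses to $d(x_i, x_{i+1})$. The inequality of the hypothesis therefore reduces to $d(x_{i+1}, x_{i+2}) \le (b+c)\, d(x_i, x_{i+1})$. Since $b+c < 1$, a routine induction gives $d(x_i, x_{i+1}) \le (b+c)^i d(x_0, x_1)$, and the triangle inequality promotes this to a Cauchy estimate on $\{x_i\}$. Proposition~\ref{eventuallyConst} then produces $N$ with $x_i = x_N$ for all $i \ge N$, so $fx_N = x_{N+1} = x_N$ and a fixed point exists.

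For uniqueness, suppose $x'$ is a fixed point with $x' \ne x_N$. Applying the hypothesis with $x = x_N$, $y = x'$, the terms $d(x_N, fx_N)$ and $d(x', fx')$ both vanish, so the inequality collapses to $(1+a)\, d(x_N, x') \le c \cdot d(x_N, x')$. Since $a \ge 0$ and $c < 1$, this forces $d(x_N, x') = 0$, a contradiction.

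I do not anticipate a serious obstacle. The only subtle point is the presence of $d(x,y)$ in the denominator of the hypothesis, which makes it formally inapplicable when $x = y$; this is handled by the dichotomy above, where coincidence $x_i = x_{i+1}$ gives the fixed point directly. Everything else is a direct transcription of the standard contraction argument, with Proposition~\ref{eventuallyConst} replacing the usual completeness-plus-continuity route and so allowing us to dispense with those hypotheses from the original \cite{DineshEtal} statement.
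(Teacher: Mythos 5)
Your proposal is correct and follows essentially the same route as the paper's own proof: the same Picard iteration with the dichotomy on $x_i = x_{i+1}$ to keep the hypothesis applicable, the same collapse of the quotient term to yield $d(x_{i+1},x_{i+2}) \le (b+c)\,d(x_i,x_{i+1})$, the same appeal to Proposition~\ref{eventuallyConst}, and the same uniqueness computation giving $(1+a-c)\,d(x_N,x') \le 0$. No substantive differences to report.
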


\begin{proof}
    Let $x_0 \in X$ and let $x_{i+1} = fx_i$ for $i \ge 0$. 
    If $x_{i+1} = x_i$ then $x_i$ is a fixed point. Otherwise,
    \[ d(fx_i, fx_{i+1}) + a \cdot d(x_{i+1}, fx_i) \le b \cdot 
       \frac{d(x_i,fx_{i}) \cdot d(x_i,fx_{i})}{d(x_i,x_{i+1})}
                + c \cdot d(x_i,x_{i+1}).
    \]
    This simplifies as
    \[ d(x_{i+1}, x_{i+2}) + a \cdot 0 \le b \cdot 
    \frac{[d(x_i,x_{i+1})]^2}{d(x_i,x_{i+1})} + c \cdot d(x_i,x_{i+1})
    \]
    or
    \begin{equation} 
    \label{DineshEtal3.3correctedIneq}
    d(x_{i+1}, x_{i+2}) \le (b+c) \cdot d(x_i,x_{i+1}) = k \cdot d(x_i,x_{i+1}),
    \end{equation} 
    where $0 < k = b+c < 1$. Thus 
    \[ d(x_i, x_{i+m}) \le k^{i+m} d(x_0,x_1) \to_{i,m \to \infty} 0.
    \]
    Therefore, $\{x_n\}_{n=0}^{\infty}$ is a Cauchy sequence. 
    By Proposition~\ref{eventuallyConst}, for some~$N$,
    $x_N$ is a fixed point of~$f$.

    To show the uniqueness of this fixed point, let $x'$ be
    a fixed point of~$f$ and suppose $x' \neq x_N$. Then
    \[ d(fx_N, fx') + a \cdot d(x',fx_N) \le 
       b \cdot \frac{[d(x_N, fx_N)]^2}{d(x_n,x')} + c \cdot d(x_N,x')
    \]
    or
    \[ d(x_N, x') + a \cdot d(x',x_N) \le 0 + c \cdot d(x_N,x')
    \]
    \[ (1+a) \cdot d(x_N,x') \le c \cdot d(x_N,x')
    \]
    \[ (1+a-c) \cdot d(x_N,x') \le 0.
    \]
    Since $c < 1$, the previous inequality contradicts the
    assumption that $x' \neq x_N$.
\end{proof}

\section{\cite{OAA}'s assertions}
Two assertions are presented as new theorems in~\cite{OAA}.
We show that one of these reduces to triviality,
and we show the other is not correctly proven.

\subsection{\cite{OAA}'s ``Theorem" 3.1}
We use the following.

\begin{definition}
    {\rm \cite{EgeKaraca-Ban}}
    Let $(X,d,\kappa)$ be a digital metric space and
    $f: X \to X$.
    If for some $\alpha \in [0,1)$ and all $x,y \in X$ we have
    \[ d(fx,fy) < \alpha d(x,y)
    \]
    then $f$ is a {\em digital contraction map}.
\end{definition}

The digital Banach contraction principle is the following.

\begin{thm}
\label{BanachPrinc}
    {\rm \cite{EgeKaraca-Ban}}; corrected proof~{\rm \cite{BxBad10}}
     Let $(X,d,\kappa)$ be a uniformly discrete digital metric space 
     and let $f: X \to X$ be a digital contraction map.
    Then~$f$ has a unique fixed point.
\end{thm}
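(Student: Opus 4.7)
The plan is to adapt the standard Banach fixed point argument to the digital/discrete setting, using Proposition~\ref{eventuallyConst} in place of the usual completeness argument. Start with an arbitrary $x_0 \in X$ and define Picard iterates $x_{n+1} = f x_n$ for $n \ge 0$. The contraction hypothesis gives
\[ d(x_{n+1}, x_{n+2}) = d(f x_n, f x_{n+1}) < \alpha \cdot d(x_n, x_{n+1}),\]
so an easy induction yields $d(x_n, x_{n+1}) \le \alpha^n \cdot d(x_0, x_1)$.

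Next I would show $\{x_n\}$ is Cauchy by applying the triangle inequality and a geometric-series bound:
\[ d(x_n, x_{n+m}) \le \sum_{k=0}^{m-1} d(x_{n+k}, x_{n+k+1}) \le \frac{\alpha^n}{1-\alpha} \cdot d(x_0,x_1),\]
which tends to $0$ as $n \to \infty$. Since $(X,d)$ is uniformly discrete, Proposition~\ref{eventuallyConst} applies, yielding $N$ such that $x_n = x_N$ for all $n \ge N$. In particular $f x_N = x_{N+1} = x_N$, so $x_N$ is a fixed point of $f$.

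For uniqueness, suppose $x'$ is also a fixed point. If $x' \neq x_N$, then $d(x_N, x') > 0$, and applying the contraction inequality gives
\[ d(x_N, x') = d(f x_N, f x') < \alpha \cdot d(x_N, x'),\]
forcing $(1-\alpha) \cdot d(x_N, x') < 0$, a contradiction since $\alpha < 1$. Hence $x' = x_N$.

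There is no genuine obstacle here: the infrastructure of Section~2 (particularly Proposition~\ref{eventuallyConst}) was set up precisely so that the classical Banach proof transfers with minimal change. The only mild subtlety is that the definition uses strict inequality $d(fx,fy)<\alpha d(x,y)$, but this is stronger than what is actually needed, and the argument uses it only through the weaker form $d(fx,fy) \le \alpha d(x,y)$ together with strictness at the single step of the uniqueness contradiction, both of which are handled transparently above.
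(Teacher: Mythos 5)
Your proof is correct, and it follows exactly the template this paper uses for its own fixed-point results (Picard iteration, a geometric-series Cauchy estimate, and Proposition~\ref{eventuallyConst} to get eventual constancy, then strictness of the contraction inequality for uniqueness); the paper does not reprint the proof of Theorem~\ref{BanachPrinc} but cites the corrected proof in~\cite{BxBad10}, which proceeds the same way. Your remark about only needing the weak form $d(fx,fy)\le\alpha\, d(x,y)$ except at the uniqueness step is apt, since the strict inequality in the definition is vacuously problematic when $x=y$.
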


The paper~\cite{OAA} states the following as its ``Theorem"~3.1.

\begin{assert}
    \label{OAAthm3.1}
    Let $(X,d,u)$ be a complete digital metric space.
    Let $G: X \to X$ be a mapping such that, for all $x,y \in X$ and for all non-negative $a,b,c$ with $a+b+c < 1$, \newline
    $(G_1)$ $d(Gx,Gy) \le a \cdot d(x,y)$ \newline
    $(G_2)$ $d(Gx,Gy) \le b \cdot [d(x,Gx) + d(y,Gy)]$ \newline
    $(G_3)$ $d(Gx,Gy) \le c \cdot [d(x,Gy) + d(y,Gx)]$ \newline
    and
    \[ d(Gx,Gy) < \beta \cdot \max
    \left \{ d(x,y), \frac{d(x,Gx)+ d(y,Gy)}{2}, \frac{d(x,Gy)+ d(y,Gx)}{2}
    \right \}.
    \]
   Let $P: X \to X$ be a mapping such that, for all $x,y \in X$, \newline
    $(P_1)$ $d(Px,Py) \le a \cdot d(x,y)$ \newline
    $(P_2)$ $d(Px,Py) \le b \cdot [d(x,Px) + d(y,Py)]$ \newline
    $(P_3)$ $d(Px,Py) \le c \cdot [d(x,Py) + d(y,Px)]$ \newline
    and
    \[ d(Px,Py) < \beta \cdot \max
    \left \{ d(x,y),~\frac{d(x,Px)+ d(y,Py)}{2},~\frac{d(x,Py)+ d(y,Px)}{2}
    \right \}.
    \] 
    Then, if $G$ and $P$ commute, they have a unique and common fixed
    point.
\end{assert}

Notice:
\begin{itemize}
    \item The digital Banach Fixed Point Principle (\cite{EgeKaraca-Ban}; corrected~ proof~\cite{BxBad10}) requires only a particular $a \in [0,1)$,
          not all $a,b,c \in [0,1)$, and does not require $(G_2)$ or $(G_3)$ to show
          the existence of a fixed point for a function~$G$ that satisfies~$(G_1)$.
          Similarly for the function~$P$. 
    \item No restriction on~$\beta$ is given in the statement of
    Assertion~\ref{OAAthm3.1}.
    \item Perhaps more importantly, in spite of the multipage argument offered as its
    proof in~\cite{OAA}, this proposition reduces to triviality, as we show below,
    despite omitting several of the hypotheses of~Assertion~\ref{OAAthm3.1}.
    \item Also, the argument offered as ``proof" has flaws including:
    \begin{itemize}
        \item an undefined notion, used in the second line of the argument, 
              that a point is ``contained" in another point; and
        \item an undefined notion, used in the fifth line of the argument, 
              that a distance is ``contained" in another distance.
    \end{itemize}
\end{itemize}

We modify Assertion~\ref{OAAthm3.1} to obtain the following Proposition~\ref{OAA3.1finite}, which shows how Assertion~\ref{OAAthm3.1}
reduces to triviality for a finite digital metric space.

By the {\em diameter} of a finite metric space $(X,d)$ we mean
\[ diam(X) = \max \{d(x,y) \mid x,y \in X \}.
\]

\begin{prop}
\label{OAA3.1finite}
    Let $(X,d,u)$ be a finite digital metric space.
    Let $G$ and~$P$ be self-maps on $X$ satisfying~($G_1$) and~($P_1$) of Assertion~\ref{OAAthm3.1}, respectively, for some coefficient~$a$ such that
    \[ 0 \le a < \frac{\min \{d(u,v) \mid u,v \in X, u \neq v \}}{diam(X)}.
    \] 
    Then each of $G, P$ has a unique fixed point, $u_G,u_P$, respectively.
    If~$G$ and~$P$ commute, then $G$ and $P$ are equal constant functions.
\end{prop}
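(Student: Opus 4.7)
The plan is to show that the hypothesis on $a$ is strong enough to force $G$ (and likewise $P$) to be a constant function; once that is established, every remaining claim falls out immediately.

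First, since $X$ is finite, the quantity $\delta := \min \{d(u,v) \mid u,v \in X,~u \neq v\}$ is a positive real number, and the hypothesis on $a$ is precisely that $a \cdot diam(X) < \delta$. For any $x,y \in X$, property $(G_1)$ gives
\[ d(Gx, Gy) \le a \cdot d(x,y) \le a \cdot diam(X) < \delta. \]
By the definition of $\delta$, this forces $Gx = Gy$. Hence $G$ is constant: there exists $u_G \in X$ with $Gx = u_G$ for every $x \in X$. In particular $G(u_G) = u_G$, and any fixed point $x$ of $G$ satisfies $x = Gx = u_G$, so $u_G$ is the unique fixed point of $G$. The same argument applied to $P$ yields a unique constant value $u_P$ that is the unique fixed point of $P$.

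For the commuting case, evaluate both sides of the identity $G \circ P = P \circ G$ at any $x \in X$: the left side equals $G(u_P) = u_G$ and the right side equals $P(u_G) = u_P$. Hence $u_G = u_P$, and so $G$ and $P$ are the same constant function. There is no real obstacle here --- the entire argument hinges on the single observation that the chosen bound on $a$ pushes every distance between image points below $\delta$, collapsing the range of $G$ (and of $P$) to one point. All the elaborate hypotheses $(G_2),(G_3),(P_2),(P_3)$, the $\beta$-inequality, and the completeness assumption of Assertion~\ref{OAAthm3.1} play no role whatsoever, which is exactly the ``reduction to triviality'' the proposition is designed to expose.
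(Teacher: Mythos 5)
Your proof is correct, and it takes a slightly different (more self-contained) route than the paper's. The paper first notes that $(G_1)$ and $(P_1)$ make $G$ and $P$ digital contraction maps and invokes the digital Banach contraction principle (Theorem~\ref{BanachPrinc}, applicable because a finite metric space is uniformly discrete) to obtain the unique fixed points $u_G$ and $u_P$; only afterwards does it use the bound on $a$ to get $d(Gx,Gy) < \min\{d(u,v) \mid u,v \in X,\ u \neq v\}$ and conclude that $G$ and $P$ are constant. You invert that order: you derive constancy directly from the same distance estimate, and then existence and uniqueness of the fixed point come for free, since a constant map's value is its only fixed point. This buys you independence from Theorem~\ref{BanachPrinc} (and from the uniform-discreteness machinery behind it), arguably making the ``reduction to triviality'' even starker; the paper's ordering instead emphasizes that $(G_1)$ alone already delivers the Banach-principle conclusion before constancy is even mentioned. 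The commuting step is essentially identical in both arguments. The only point both write-ups gloss over is the degenerate case $\#X=1$, where $diam(X)=0$ and the minimum in the hypothesis ranges over an empty set; the statement is vacuous or trivial there, so nothing is lost.
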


\begin{proof}
    By ($G_1$) and ($P_1$), each of $G, P$ is a digital
    contraction map. By Theorem~\ref{BanachPrinc}, each has
    a unique fixed point, $u_G,u_P$, respectively.

    Then ($G_1$) implies for all distinct $x,y \in X$,
    \[ d(Gx,Gy) \le a \cdot d(x,y) < \frac{\min \{d(u,v) \mid u,v \in X, u \neq v \}}{diam(X)} \cdot d(x,y) \le
    \]
    \[ \min \{d(u,v) \mid u,v \in X, u \neq v \} \]
    so $d(Gx,Gy) = 0$, and similarly, $d(Px,Py) = 0$.
    Thus~$G$ and~$P$ are constant functions taking values
    $u_G,u_P$, respectively.
    
     If~$G$ and~$P$ commute, we have
     \[ u_G = G(P(u_G)) = P(G(u_G)) = u_P.
     \]
     Thus $G=P$.
\end{proof}

\subsection{\cite{OAA}'s ``Theorem" 3.2}
Assuming~\cite{OAA}'s ``$(G(Gx), G(Gy))$" is intended to be
``$d(G(Gx), G(Gy))$" and ``$(P(Px), P(Py))$" is intended to be
``$d(P(Px), P(Py))$", the following is a corrected statement 
of~\cite{OAA}'s ``Theorem"~3.2.

\begin{assert}
\label{OAA3.2}
    Let $(X,d,u)$ be a digital metric space. Let
    $G,P: X \to X$. Suppose for all
    $x,y \in X$ and non-negative constants $e,f,g,h,i$ such that
    \[ e+f+g+h+i < 1,
    \]
    we have
    \[ d(G(Gx),G(Gy)) \le e \cdot d(Gx,Gy) + f \cdot d(Gx, G(Gx)) + g \cdot d(Gy,G(Gy)) +
    \]
    \[ h \cdot d(Gx,G(Gy)) + i \cdot d(Gy, G(Gx))
    \]
    and
     \[ d(P(Px),P(Py)) \le e \cdot d(Px,Py) + f \cdot d(Px, P(Px)) + g \cdot d(Py,P(Py)) +
    \]
    \[ h \cdot d(Px,P(Py)) + i \cdot d(Py, P(Px)).
    \]
    Then, if $G$ and $P$ commute, they have a unique and common fixed
     point.   
\end{assert}

That Assertion~\ref{OAA3.2} is not correctly proven is shown as follows.
Errors in the argument offered as proof in~\cite{OAA} include:
\begin{itemize}
    \item In the first two lines of the ``proof" we find
    ``If $G(x) \subset P(x)$", an undefined notion. 
    \item On the 3rd line of the ``proof" we have the claim that
    \[ (G(Gx), G(Gy)) \le (P(Px), P(P(y)))
    \]
    which is unsupported even if rewritten as
     \[ d(G(Gx), G(Gy)) \le d(P(Px), P(P(y))).
    \]
    \item On the 2nd line of the ``proof's" second page, we find
    ``Since $G(x) \subset P(x)$" [as above, undefined] "it follows
    that $G(x) \le P(x)$" [also undefined since $X$ is not assumed
    a subset of~$\Z$].
    \item Later in the ``proof", $r$ is defined by
    \[r = \frac{e+f+h}{1-g-h}\]
    and it is claimed that $r < 1$, which is false for some
    choices of $e,f,g,h,i$.
\end{itemize}
We must conclude that Assertion~\ref{OAA3.2} is unproven.

\section{\cite{SalJah}'s assertion}
The paper~\cite{SalJah} claims the following as its ``Theorem"~3.1.

\begin{assert}
\label{SalJahAssert}
Let $(X,d)$ be a complete digital metric space. Let
$T: X \to X$ be a mapping. Let $k_1,k_2,k_3$ be nonnegative
constants such that
\begin{equation}
\label{SalJahAssume}
    k_1^2 + k_2^2 + k_3^2 < 1
\end{equation} 
and for all $a,b \in X$,
\[ d(Ta,Tb) \le \]
\begin{equation}
    \label{SalJahIneq}
    k_1^2 \cdot d(a,b) + k_2^2 \cdot [d(a,Ta) + d(b,Tb)] +
    k_3^2 \cdot \sqrt{d(a,b) \cdot \min \{ d(a,Ta), d(b,Tb)\}}.
\end{equation}
Then $T$ has a unique fixed point in~$X$.
\end{assert}

That Assertion~\ref{SalJahAssert} is false is shown by the
following example.

\begin{exl}
    Let $X = [0,1]_{\Z}$ and let $T: X \to X$ be given by
    $T(x) = 1 - x$. Notice $T$ has no fixed point. However, we
    can choose $k_1,k_2,k_3$ to satisfy the hypotheses of 
    Assertion~\ref{SalJahAssert}.
\end{exl}

\begin{proof}
    Take $d$ to be the usual metric for~$\Z$, $d(x,y) = |x-y|$.
    Let $k_1 = 0$, $k_2 = \sqrt{0.9}$, $k_3 = 0.3$.
    Then
    \[ k_1^2 + k_2^2 + k_3^2 = 0 + 0.9 + 0.09 = 0.99 < 1
    \]
    and~(\ref{SalJahIneq}) reduces to
        \begin{equation}
        \label{SalJahIneqReduced}
        d(Ta,Tb) \le 0.9 \cdot [d(a,Ta) + d(b,Tb)]  + 0.09 \cdot
       \sqrt{d(a,b) \cdot \min\{d(a,Ta), d(b,Tb)\}}.
    \end{equation}

    To show that~(\ref{SalJahIneqReduced}) holds, we
    consider the following cases.
    \begin{itemize}
        \item If $a = b$, the left side of~(\ref{SalJahIneqReduced})
        is 0, so~(\ref{SalJahIneqReduced}) holds in this case.
        \item If $a \neq b$, without loss of generality, $a=0$ and
        $b=1$. Then
        \[ d(Ta,Tb) = d(1,0) = 1 < 1.89 = 0.9 (2) + 0.09 \sqrt{1 \cdot \min\{1,1\}} = 
        \] 
        \[ 0.9 \cdot [d(a,Ta) + d(b,Tb)] + 0.09 \sqrt{d(a,b) \cdot \min\{d(a,Ta), d(b,Tb)\}},
        \]
        satisfying~(\ref{SalJahIneqReduced}).
    \end{itemize}
    Thus for all $a,b \in X$, (\ref{SalJahIneqReduced}) holds.
\end{proof}

\section{Further remarks}
We paraphrase~\cite{BxBad8}:
\begin{quote}
We have discussed several papers that seek to advance
fixed point assertions for digital metric spaces.
Many of these assertions are incorrect, incorrectly proven, 
or reduce to triviality; consequently, all of these
the authors, but also on the referees and editors who
approved their publication, and, perhaps, predatory journals
that accept payment for publication regardless of quality.
\end{quote}

\end{document}